\newtheorem{theorem}{Theorem}[section]
\newtheorem{proposition}[theorem]{Proposition}
\newtheorem{lemma}[theorem]{Lemma}
\newtheorem{remark}{Remark}[section]
\newtheorem{definition}{Definition}[section]
\newtheorem{mainquestion}{Question}
\newtheorem{maintheorem}{Theorem}
\newcommand{\blanksquare}{\,\,\,$\sqcup\!\!\!\!\sqcap$}
\newcounter{example}
{\par}
\begin{document}

\title[Generic Hamiltonian Dynamics]{\textbf{Generic Hamiltonian Dynamics}}

\author[M. Bessa]{M. Bessa}
\address{M\'ario Bessa, Departamento de Matem\'atica, Universidade da Beira Interior, Rua Marqu\^es d'\'Avila e Bolama,
  6201-001 Covilh\~a
Portugal.}
\email{bessa@ubi.pt}

\author[C. Ferreira]{C. Ferreira}
\author[J. Rocha]{J. Rocha}
\address{C\'elia Ferreira $\&$ Jorge Rocha, Departamento de Matem\'atica, Universidade do Porto, 
Rua do Campo Alegre, 687, 
4169-007 Porto, Portugal}
\email{celiam@fc.up.pt}
\email{jrocha@fc.up.pt}

\author[P. Varandas]{P. Varandas}
\address{Paulo Varandas, Departamento de Matem\'atica, Universidade Federal da Bahia\\
Av. Ademar de Barros s/n, 40170-110 Salvador, Brazil $\&$ CMUP, University of Porto -Portugal}
\email{paulo.varandas@ufba.br}

\date{\today}

\begin{abstract}
In this paper we contribute to the generic theory of Hamiltonians by proving that there is a $C^2$-residual $\mathcal{R}$ in the set of $C^2$ Hamiltonians on a closed symplectic manifold $M$, such that, for any $H\in\mathcal{R}$, there is a full measure subset of energies $e$ in $H(M)$  such that the Hamiltonian level $(H,e)$ is topologically mixing; moreover these level sets are homoclinic classes.
\end{abstract}

\maketitle

2010 \emph{Mathematics Subject Classification}: Primary 37C20, 37C10; Secondary 37J10.

\emph{Keywords}: Hamiltonian vector field, topological transitivity, topological mixing, pseudo-orbits.

\section{Introduction}

\subsection{Hamiltonians, transitivity and mixing}
The Hamiltonian systems form a fundamental subclass of all dynamical systems generated by differential equations. Their importance follows from the vast range of applications throughout different branches of science. In fact, laws of physics are mostly expressed in terms of differential equations, and a well understand and successful subclass of these differential equations, which leave invariant a symplectic structure, are the Hamiltonian equations (see \cite{Abraham}).

Generic properties of such continuous-time systems, i.e. properties which hold on Baire's second category (or non meagre) sets, are thus of great importance and interest since they give us the typical behavior in an appropriate sense that one could expect from the class of models at hand (cf.~\cite{MM,R0, MBJLD, MBJLD2}). There are, of course, considerable limitations to the amount of information we can extract from a specific system by looking at generic cases.
Nevertheless, it is of great utility to learn that a selected model can be slightly perturbed in order to obtain dynamics we understand in a reasonable way.

The topological transitivity is a global property of a dynamical system. As a motivation for this notion, we may think of a real physical system, where a state is never measured exactly. Thus, instead of points, we should study small open subsets of the phase space and describe how they move in that space. 
If each one of these open subsets meet each other by the action of the system after some time, then we say that the system is \textsl{topologically transitive}.
Equivalently, if we take a compact phase space, we may say that the system has a \textit{dense orbit}.
However, if the open subsets remain inseparable after some time, by the iteration of the system, then we say that the system is \textsl{topologically mixing}.
Obviously, a topologically mixing system is also a topologically transitive system.

There exist a lot of transitive systems, as the \textsl{irrational rotations} of $\mathbb{S}^1$, the \textsl{shift maps} and the \textsl{basic sets} (see ~\cite{S}).
It is also well-known that \textsl{$C^{1+\alpha}$-Anosov Hamiltonian flows} ($\alpha>0$) are ergodic and so transitive. In fact, the same holds for \textsl{$C^{1}$-Anosov Hamiltonian flows} because, by the Poincar\'e recurrence theorem, its  non-wandering set equals the whole manifold (energy level) and by the Anosov closing lemma the periodic orbits are dense in the non-wandering set (\cite{S}). Then, using Smale spectral decomposition (\cite{S}), we get only one piece which is transitive. 
Nevertheless, transitivity is not an open property.

\begin{mainquestion}\label{quest1}
Can the transitivity property be generic? 
\end{mainquestion}

Some authors have been working on this question. 
The first remarkable result on this subject is due to Bonatti and Crovisier, in \cite{BC}. They show that, $C^1$-generically, a $C^1$-conservative diffeomorphism is transitive.
Later, jointly with Arnaud, Bonatti and Crovisier extend this result for $C^1$-symplectic diffeomorphisms  (see \cite{ABC}).
Adapting the techniques used to prove these results to the continuous-time case, one of the authors proved an analogous result for $C^1$-divergence-free vector fields. 
In fact, by a result due to Abdenur et al. (see \cite{AAB}), the first author was able to show that, $C^1$-generically, a divergence-free vector field is topologically mixing (see \cite{B1}). Recently, the results in \cite{BC,ABC} got an upgrading in \cite{AC}. In the direction against the abundance of transitivity (ergodicity), but with a much more exigent smoothness hypothesis, Markus and Meyer proved that generic Hamiltonians are neither integrable nor ergodic (\cite{MM}).

In a first approach and since Hamiltonians (see \S\ref{def} for details) are always equipped with the Hamiltonian function $H\colon M\rightarrow\mathbb{R}$, the energy $\{e\}$ and the energy levels $H^{-1}(\{e\})$, it is not clear what could be the statement of the version of \cite{AC,ABC,B1} for Hamiltonians: Can we expect generic Hamiltonians and any energy? Or generic Hamiltonians and generic energy? Or else any other statement? So, the first difficulty is conceptual, namely how to obtain a properly formulation of the result. 

One of the main results of the present paper (Theorem \ref{cor:density}) is a generalization of \cite{BC,ABC} for Hamiltonians and states that ``most" Hamiltonians have ``most" energy levels indecomposable in the sense that we cannot split the energy level or, in other words, there is some orbit that winds around the whole energy level. Here ``most" Hamiltonians means $C^2$-Baire's second category and ``most" energy levels means full Lebesgue measure set.  This theorem contrasts with well-established results for thinner topologies; indeed, KAM theorem (see \cite{Y}) makes impossible to obtain the same result due to the persistence of invariant tori with positive measure. Furthermore, we prove that for $C^2$-generic Hamiltonians each connected component of the energy level is a homoclinic class (Theorem~\ref{homoclinicorol}) and it is topologically mixing (Theorem~\ref{maintheorem}). An ingredient of the proof of previous results is a connecting lemma for pseudo-orbits for Hamiltonians which is of independent interest and whose sketch of the proof is given in the appendix.

\subsection{Basic definitions and statement of the results}\label{def}
To state precisely the main results  we introduce some definitions. Let $(M^{2d},\omega)$ be a symplectic manifold, where $M=M^{2d}$ ($d\geq2$) is a closed, connected and smooth Riemannian manifold, endowed with a symplectic form $\omega$.
Denote by $C^s(M,\mathbb{R})$ the set of $C^s$-real-valued functions on $M$ and call $H\in C^s(M,\mathbb{R})$ a \textsl{$C^s$-Hamiltonian}, for $s\geq2$\label{hamiltdefin}.
From now on, we set $s=2$. Given a Hamiltonian $H$, we can define the \textsl{Hamiltonian vector field} $X_H$ by: 
\begin{equation}\label{formula}
\omega(X_H(p),u)=\nabla_pH(u), \;\forall u\in T_pM,
\end{equation} 
which generates the Hamiltonian flow $X_H^t$. Observe that $H$ is $C^2$ if and only if $X_H$ is $C^1$ and that, since $H$ is smooth and $M$ is closed, $\emph{Sing}(X_H)\neq\emptyset$, where $\emph{Sing}(X_H)$ stands for the singularities of $X_H$ or, in other words, the critical points of $H$. We denote by $Per(H)$ the set of closed orbits for $X^t_H$,  $\mathscr{O}_H(x)$ the $X^t_H$-orbit of $x$ and $\mathscr{O}_H^+(x)$ the forward $X^t_H$-orbit of $x$.
We say that $\tilde{H}$ is $\epsilon$-$C^2$-close to $H$, for $\epsilon>0$ fixed, if $\|H-\tilde{H}\|_{C^2}<\epsilon$, where $\|H-\tilde{H}\|_{C^2}$ denotes the $C^2$-distance between $H$ and $\tilde{H}$.

A scalar $e\in H(M)\subset \mathbb{R}$ is called an \textsl{energy}\label{energydef} of $H$. 
An \textit{energy hypersurface} $\mathcal{E}_{H,e}$\label{energhyperdef} is a connected component of $H^{-1}(\left\{e\right\})$, called \textsl{energy level set}. The energy level set $H^{-1}(\left\{e\right\})$ is said to be \textsl{reg\-u\-lar} if any energy hypersurface of $H^{-1}(\left\{e\right\})$ is reg\-u\-lar, i.e, does not contain any singularity. In this case, we can also say that the energy $e$ is regular.
Observe that a regular \textit{energy hypersurface} is a $X_H^t$-invariant, compact and $(2d-1)$-dimensional manifold. 

Consider a Hamiltonian $H\in C^2(M,\mathbb{R})$, an energy $e\in H(M)$ and a regular energy hypersurface $\mathcal{E}_{H,e}$.
The triplet $(H,e, \mathcal{E}_{H,e})$ is called a \textsl{Hamiltonian system} and the pair $(H,e)$ is called a \textsl{Hamiltonian level}. If $(H,e)$ is regular then $H^{-1}(\{e\})$ corresponds to the union of a finite number of closed connected components, that is, $H^{-1}(\{e\})=\displaystyle\sqcup_{i=1}^{I_e}\mathcal{E}_{H,e,i}$, for $I_e\in\mathbb{N}$. Fixing a small neighborhood $\mathcal{W}$ of a regular energy hypersurface $\mathcal{E}_{H,e}$, there exist a small neighborhood $\mathcal{U}$ of the Hamiltonian $H$ and $\epsilon>0$ such that, for any $\tilde{H} \in \mathcal{U}$ and for any $\tilde{e} \in (e-\epsilon,e+\epsilon)$, we have $\tilde{H}^{-1}(\{\tilde{e}\})\cap \mathcal{W}=\mathcal{E}_{\tilde{H},\tilde{e}}$. 
The energy hypersurface $\mathcal{E}_{\tilde{H},\tilde{e}}$ is called the \textsl{analytic continuation} of $\mathcal{E}_{H,e}$.

Let   $X_H$  be a Hamiltonian vector field, $x$ a regular point  in $M$ and $e=H(x)$.
Define $\mathcal{N}_x:=N_x\cap T_xH^{-1}(\left\{e\right\})$, where $T_xH^{-1}(\left\{e\right\})=\emph{Ker}\: \nabla H(x)$ is the tangent space to the energy level set. 
Thus, $\mathcal{N}_x$ is a ($dim(M)-2$)-dimensional bundle. The \textit{transversal linear Poincar\'{e} flow} associated to $H$ is given by $\Phi_H^t(x): \mathcal{N}_{x}\rightarrow \mathcal{N}_{X_H^t(x)}$ where $\Phi_H^t(x)\cdot v=\Pi_{X_H^t(x)}\circ D{X_H}^t_x(v)$, where $\Pi_{X_H^t(x)}: T_{X_H^t(x)}M\rightarrow \mathcal{N}_{X_H^t(x)}$ denotes the canonical orthogonal projection. Observe that $\mathcal{N}_x$ is $\Phi^t_{H}(x)$-invariant. Now,  $\mathcal{N}_x$ can be seen as the quotient space $T_x\mathcal{E}_{H,e} / \langle X_H \rangle$, and consider the symplectic form $\tilde{\omega}_{\mathcal{E}_{H,e}}: \mathcal{N}_x \times \mathcal{N}_x \rightarrow \mathbb{R}$ defined by $\tilde{\omega}_{\mathcal{E}_{H,e}}([u],[v])=\omega(u,v)$ for any $u,\, v \in T_x\mathcal{E}_{H,e}$. 
Note that this form is well defined since $\tilde{\omega}_{\mathcal{E}_{H,e}}([X_H],[v])=\omega(X_H,v)=\nabla H(v)=0$, for any $v \in T_x\mathcal{E}_{H,e}$.
It is well-known (see e.g. ~\cite{Abraham}) that, given a regular point $p\in\mathcal{E}_{H,e}$, $\Phi_H^t(p)$ is a linear symplectomorphism for  $\tilde{\omega}_{\mathcal{E}_{H,e}}$. Given a closed orbit $p$ of period $\ell>0$ of a Hamiltonian $H$, the transversal linear Poincar\'{e} flow $\Phi^\ell_H(p)$, is the derivative of the standard Poincar\'e map. We say that $p$ is a \emph{hyperbolic point} if the eigenvalues of $\Phi^\ell_H(p)$ do not intersect the unit circle and we say that $p$ is an \emph{elliptic point} if some of the eigenvalues of $\Phi^\ell_H(p)$ are non-real and of norm equal to one and the remaining ones (if any) have norm different from one. Finally, we say that $p$ is a \emph{totally elliptic point} if all the eigenvalues of $\Phi^\ell_H(p)$ are non-real and of norm equal to one.

Given a hyperbolic closed orbit of a Hamiltonian $H$, with period $\ell$, and $p\in\gamma$.
We define the \textit{stable} and \textit{unstable} manifolds of $\gamma$ by $W_H^{s,u}(\gamma)=\bigcup_{0\leq t\leq \ell}X_H^t(W_H^{s,u}(p))$. The \textsl{homoclinic class} of $\gamma$ is defined by $\mathcal{H}_{\gamma,H}=\overline{W^s_H({\gamma})\pitchfork W^u_H({\gamma})}$, where $\overline{A}$ stands for the closure of the set $A$ and $\pitchfork$  denotes the trans\-versal intersection of manifolds. 

It is well-known that a non-empty homoclinic class is invariant by the flow, has a dense orbit, contains a dense set of closed orbits and is transitive. 
Moreover, the hyperbolic closed orbits are dense in the homoclinic class. If \textbf{all} energy hypersurface of $H^{-1}(\{e\})$ are homoclinic classes, we say that $H^{-1}(\{e\})$ is a homoclinic class.


\begin{definition}\label{topmixhamdef}
A compact energy hypersurface $\mathcal{E}_{H,e}$ is \emph{transitive} (respectively \emph{topologically mixing}) if, for any open and non-empty subsets of $\mathcal{E}_{H,e}$, say $U$ and $V$, there is $\tau\in\mathbb{R}$ such that $X_H^\tau(U)\cap V\neq\emptyset$ (respectively $X_H^t(U)\cap V\neq\emptyset$ for any $t\geq\tau$).
A regular Hamiltonian level $(H,e)$ is \emph{transitive} (respectively \emph{topologically mixing}) if \textbf{each one} of the energy hypersurfaces of $H^{-1}(\{e\})$ is transitive (respectively topologically mixing).
\end{definition}

The following result is an immediate consequence of the fact that the Morse functions are $C^2$-open and dense among $C^2(M,\mathbb{R})$ and Sard's theorem. 
\begin{lemma}\label{lemmaregular}
There is a $C^2$-open and dense subset $\mathcal{O}$ in $C^2(M,\mathbb{R})$ such that, for any $H\in\mathcal{O}$ there exists an open and dense set, full Lebesgue measure subset
$\mathcal{S}(H) \subset H(M)$ of energies such that any energy $e\in\mathcal{S}(H)$ the Hamiltonian level $(H,e)$ is regular.
\end{lemma}

Accordingly with this definition, we now can state the our first result.

\begin{maintheorem}\label{translemma}
There is a residual set $\mathcal{R}$ in $C^2(M,\mathbb{R})$ such that, for any $H_0\in\mathcal{R}$ and energy $e$ in a compact subset $I \subset \mathcal{S}(H_0)$ there is a $C^2$ neighborhood $\mathcal{V}$ of $H_0$ (depending also on $I$)
and a residual subset $\mathcal{R}_{H_0}$ of $\mathcal{V}$ such that for any $H\in \mathcal{R}_{H_0}$ and $e\in I$ the Hamiltonian level $(H,e)$ is transitive.  
\end{maintheorem}

The \emph{quasi-ergodic hypothesis} states the existence of a dense orbit in the connected component of the energy levels of a given Hamiltonian. In  \cite[\S 4.4 and \S4.5]{Y} Yoccoz gives examples, in the $C^\infty$ setting, of Hamiltonians in certain symplectic manifolds which do not satisfy the quasi-ergodic hypothesis. In the opposite direction, Theorem~\ref{translemma} establishes the truthness of the quasi-ergodic hypothesis under $C^2$-generic assumptions.

In fact, we can pick the compact sets above to produce more accurate estimates:

\begin{maintheorem}\label{cor:density}
There exists a residual subset $\mathcal{R}$ in $C^2(M,\mathbb{R})$ such that for any $H\in\mathcal{R}$
the non-transitive energy levels in $H(M)$ have zero Lebesgue measure.
\end{maintheorem}

From Theorem \ref{translemma}, we can derive the following result concerning on the \textsl{homoclinic class }of a hyperbolic closed orbit $\gamma$ of $H$, which is the closure of the set of transversal intersections between the stable and unstable manifolds of all points $p$ in $\gamma$. 

\begin{maintheorem}\label{homoclinicorol}
There is a residual set $\mathcal{R}$ in $C^2(M,\mathbb{R})$ such that, for any $H_0\in\mathcal{R}$ and a generic energy $e$ in $H_0(M)$ there is a $C^2$ neighborhood $\mathcal{V}$ of $H_0$ and a residual subset $\mathcal{R}_{H_0}$ of $\mathcal{V}$ such that for any $H\in \mathcal{R}_{H_0}$ any energy hypersurface of $H^{-1}(\{e\})$ is a homoclinic class.
\end{maintheorem}

Finally, combining Theorems \ref{translemma}, \ref{cor:density} and \ref{homoclinicorol}, we obtain a stronger result  proving that, in this context, the topologically mixing property is generic. 

\begin{maintheorem}\label{maintheorem}
There is a residual set $\mathcal{R}$ in $C^2(M,\mathbb{R})$ such that, for any $H_0\in\mathcal{R}$ and a generic energy $e$  in $H_0(M)$ there is a $C^2$ neighborhood $\mathcal{V}$ of $H_0$ and a residual subset $\mathcal{R}_{H_0}$ of $\mathcal{V}$ such that for any $H\in \mathcal{R}_{H_0}$ the Hamiltonian level $(H,e)$ is topologically mixing. Furthermore, for $C^2$-generic Hamiltonians the non-topologically mixing energy levels have zero Lebesgue measure.
\end{maintheorem}

The main tools to prove the previous results are the generic non-existence of resonances for Hamiltonians and a version for Hamiltonians of the Connecting Lemma for pseudo-orbits developed in \cite{ABC} by Arnaud \emph{et al.}.
To state them, we need the notions of \textsl{resonance relations} and of \textsl{pseudo-orbits}, which we now define. 

Consider $H\in C^2(M,\mathbb{R})$. Given $p\in \emph{Sing}(H)$ we consider the eigenvalues $\{\sigma_1,...,\sigma_{2d}\}$ of  $DX_H(p)$. If $q\in Per(H)$ has period $\ell$ we consider the eigenvalues $\{\sigma_1,...,\sigma_{2d-2}\}$ of  $\Phi_H^\ell(q)$. A resonance relation between $\{\sigma_1,...,\sigma_{n}\}$ ($n=2d$ for singularities and $n=2d-2$ for periodic points) is a relation of the type $\sigma_i=\prod_{j=1}^{n}\sigma_j^{k_j}$, for some $i\in\{1,...,n\}$ and $k_1,...,k_{n}$ natural numbers such that either $k_i\neq1$, or else there exists $j\neq i$ such that $k_j\neq 0$.
Since $\Phi_H^{\ell}(q)$ is symplectic, the following trivial resonance relations holds: $\sigma_i=\sigma_i\prod_{k=1}^{d-2}(\sigma_k\sigma_{d+k})^{\alpha_k}$, for naturals $\alpha_k$.
A resonance relation different from these ones is called a non-trivial resonance relation. 
Robinson proved in \cite{R0} that, $C^2$-generically, there are not non-trivial resonance relations.

\begin{theorem}\cite[Theorem 1]{R0}\label{robinson}
There is a residual $\mathcal{R}$ in $C^2(M,\mathbb{R})$ such that, for any $H\in\mathcal{R}$, any $p\in \emph{Sing}(H)$ and any $q\in Per(H)$ with period $\ell$, the eigenvalues of $DX_H(p)$ and of $\Phi_H^{\ell}(q)$ do not satisfy non-trivial resonance relations.
\end{theorem}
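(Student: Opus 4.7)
My plan is to prove this as a Kupka--Smale style statement: write $\mathcal{R}$ as a countable intersection of open and dense sets indexed by the ``complexity'' of the critical elements and resonances we want to avoid. For each $N\in\mathbb{N}$, let $\mathcal{R}_N$ be the set of $H\in C^2(M,\mathbb{R})$ such that every $p\in\mathrm{Sing}(H)$ and every closed orbit through a point $q\in\mathrm{Per}(H)$ of period $\pi\leq N$ has the property that the eigenvalues $\{\sigma_1,\ldots,\sigma_{2d}\}$ of $DX_H(p)$ (respectively of $DX_H^{\pi}(q)$) satisfy no non-trivial resonance $\sigma_i=\prod_j\sigma_j^{k_j}$ with $\sum_j k_j\leq N$. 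Since the set of non-trivial multi-indices $(i;k_1,\ldots,k_{2d})$ is countable, setting $\mathcal{R}=\bigcap_{N\geq 1}\mathcal{R}_N$ gives the desired residual set, provided each $\mathcal{R}_N$ is open and dense.

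For openness of $\mathcal{R}_N$, I would first argue that the set of singularities and of closed orbits of period at most $N$ of a Hamiltonian in $\mathcal{R}_N$ is finite and depends continuously (as a finite collection of hyperbolic/elliptic compact invariant sets) on $H$ under $C^2$-perturbations, via the implicit function theorem applied to $X_H=0$ and to the time-$t$ map minus the identity on a Poincar\'e section. The non-resonance condition is an open condition on the finite tuple of eigenvalues (the bad resonances of bounded order form a closed algebraic subset of the torus/spectrum), so it persists under small perturbation. The slight subtlety is handling critical elements that are merely continuous rather than hyperbolic: here the symplectic eigenvalue theorem (Theorem~\ref{sympeigen}) pins the spectrum on the unit circle or in hyperbolic position, and the non-resonance condition in particular forces simple eigenvalues, hence an isolated critical element.

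For density of $\mathcal{R}_N$, I would show that given $H\in C^2(M,\mathbb{R})$ and $\epsilon>0$, one can produce $\tilde H$ with $\|H-\tilde H\|_{C^2}<\epsilon$ lying in $\mathcal{R}_N$ by finitely many localized perturbations. Enumerate the finitely many critical elements of interest (after first perturbing, if needed, to ensure that the set of closed orbits of period $\leq N$ is finite, which is a standard Kupka--Smale step using transversality of the time-$t$ map). Around a singularity $p$, in Darboux coordinates centred at $p$, I would add a cut-off quadratic perturbation $\chi(x)\,Q(x)$ where $Q$ is a small quadratic form chosen so that the new Hessian at $p$ has eigenvalues off any non-trivial resonance variety of order $\leq N$; this affects the linear part of $X_H$ at $p$ by a prescribed small Hamiltonian matrix while leaving $p$ a singularity. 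Around a closed orbit $\gamma$, I would use flow-box/Darboux coordinates adapted to $\gamma$, where $H=e+H_0(\tau,y)$ in symplectic coordinates $(\tau,e,y)$ with $y$ transverse; perturbing the transverse quadratic part of $H_0$ by a $C^2$-small bump supported in one flow-box modifies the monodromy of the transversal linear Poincar\'e flow (Definition~\ref{TLPF}) by an arbitrary small symplectic matrix factor, and the symplectic group acts transitively enough on spectra to push the Floquet eigenvalues off each of the finitely many resonance varieties of order $\leq N$. Finally, a standard genericity lemma (the complement of finitely many proper real-algebraic subvarieties in the symplectic group is open and dense) lets us make all perturbations simultaneously small.

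The principal obstacle is the periodic orbit perturbation: unlike for vector fields, we must stay within the class of Hamiltonians, so the Poincar\'e map of the perturbed flow must remain symplectic for the induced form on the transversal. Writing the perturbation as a Hamiltonian in Darboux coordinates adapted to $\gamma$ automatically enforces symplecticity, but then I must verify that the map from admissible perturbation functions (smooth bumps times quadratic forms in the transverse variables) to infinitesimal variations of the Floquet spectrum is surjective onto the tangent space of the symplectic group at the identity; this is essentially a controllability statement and reduces to the fact that quadratic Hamiltonians generate the full Lie algebra $\mathfrak{sp}(2d-2,\mathbb{R})$. Once this step is in hand, the standard Baire argument closes out the theorem.
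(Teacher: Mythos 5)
The paper does not actually prove this theorem: it is stated with a citation to Robinson, \cite[Theorem 1]{R0}, and used as a black box, so there is no ``paper's own proof'' to compare against. What you have written is a reasonable reconstruction of Robinson's original Kupka--Smale-type argument, and the overall architecture is the right one: stratify by period bound and resonance order $N$, prove each stratum $\mathcal{R}_N$ open and dense, then take the countable intersection. Your observations that non-resonance forces simple eigenvalues and excludes the eigenvalue $1$ (hence isolation and persistence of critical elements, giving openness), that the density step reduces to controlling the transverse monodromy modulo finitely many proper algebraic subvarieties of $Sp(2d-2,\mathbb{R})$, and that the key controllability input is that quadratic Hamiltonians generate $\mathfrak{sp}(2d-2,\mathbb{R})$, are all correct and are precisely the points that make the argument go through in the Hamiltonian (as opposed to general vector field) setting. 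Two small cautions if you intend to write this out in full: first, in the density step you must take care that the perturbation fixing one critical element does not create new closed orbits of period $\leq N$ or destroy the non-resonance already achieved at another --- the standard fix is that the non-resonance of bounded order is an open condition on a finite set, so one arranges all perturbations to be small relative to the margin already gained; second, when you invoke the Ambrose-style cut-off quadratic perturbation near a periodic orbit you should verify that the cut-off in the flow direction does not spoil the symplectic character of the transverse return map (writing the perturbation as a Hamiltonian automatically handles this, as you note, but the $C^2$ bound on $\tilde H - H$ then depends on the length of the flow box, so one uses a flow box of definite size). Neither of these is a gap in the idea, only in the bookkeeping, and both are handled in Robinson's paper.
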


Now, we recall the notion of pseudo-orbit and the Connecting Lemma for Hamiltonians. 

\begin{definition}
Consider a Hamiltonian system $(H,e,\mathcal{E}_{H,e})$ and $\epsilon>0$. 
A sequence $\{x_i\}_{i=0}^n$ on $\mathcal{E}_{H,e}$, with $n\in\mathbb{N}$, is \textsl{an $\epsilon$-pseudo-orbit on $\mathcal{E}_{H,e}$} of length $n$, if $d(X^{1}_H(x_i), x_{i+1})<\epsilon$, for any $i\in\{0,...,n-1\}$, where $d(\cdot,\cdot)$ denotes the distance inherited by the Riemannian structure.
\end{definition}

\begin{remark}\label{remorbit}
For divergence-free vector fields, and so for Hamiltonian vector fields, we have that $\Omega(H|_{\mathcal{E}_{H,e}})=\mathcal{E}_{H,e}$. 
Therefore, any $x,y\in \mathcal{E}_{H,e}$ are connected by an $\epsilon$-pseudo-orbit, for any $\epsilon>0$.
\end{remark}

\textbf{Connecting Lemma:} (for pseudo-orbits of Hamiltonians) \emph{Take $H\in C^2(M,\mathbb{R})$ and a reg\-u\-lar energy $e\in H(M)$, such that the eigenvalues of any closed orbit of $H$ do not satisfy non-trivial resonances. 
Then, for any $C^2$-neighborhood $\mathcal{U}$ of $H$, for any energy hypersurface $\mathcal{E}_{H,e}\subset H^{-1}(\{e\})$ and for any $x,y\in\mathcal{E}_{H,e}$ connected by an $\epsilon$-pseudo-orbit, for $\epsilon>0$, there exist $\tilde{H}\in\mathcal{U}$ and $t>0$ such that $e=\tilde{H}(x)$ and $X^t_{\tilde{H}}(x)=y$ on the analytic continuation $\mathcal{E}_{\tilde{H},e}$ of $\mathcal{E}_{H,e}$.}

\medskip

To prove this result, we have to resume the arguments used by Arnaud et al. \cite{ABC,BC}, and to adapt them to the Hamiltonian setting. 
Besides the perturbation techniques, the core of the proofs is the need to restrict our attention to the energy hypersurface, in order to perturb the Hamiltonian and keep the energy, when analyzing the perturbations and their supports. This strategy was firstly followed by Bonatti and Crovisier for diffeomorphisms (see \cite{BC}).
Later, jointly with Arnaud (see \cite{ABC}), these authors proceeded with this methodology to get the proof of the Connecting Lemma for pseudo-orbits of symplectomorphisms. 
The main novelties in the symplectomorphisms context are the need for the perturbations to be \textsl{symplectic} and the fact that closed orbits can be \textsl{stably elliptic}. 
This means that the symplectomorphisms case cannot be reduced to the one treated in \cite{BC}, where closed orbits are assumed to be hyperbolic.
That is why, in \cite{ABC}, the authors prove this result for symplectomorphisms, by doing the necessary changes. 

For the Hamiltonian case, recall that the transversal linear Poincar\'e flow is, in fact, a symplectomorphism and observe that we are assuming the absence of singularities on the energy hypersurfaces. 
Keeping in mind the strategy described in \cite{ABC}, the novelties in the proof of the \textsl{Connecting Lemma for pseudo-orbits of Hamiltonian} are the statement of adequate definitions and, since the energy hypersurfaces are invariant by the Hamiltonian flow, the need for the pseudo-orbit being completely contained in the same energy hypersurface. 
Hence, we have to ensure the creation of symplectic perturbations without leaving the initial energy hypersurface. 
Recall that the energy hypersurface is indexed to the Hamiltonian.
Thus, it may change when we perturb the Hamiltonian. 
That is why, in the statement of the Connecting Lemma, we want the energy of the points in the pseudo-orbit to be kept constant, even if we $C^2$-perturb the Hamiltonian. 
However, since we are allowed to push along the energy levels (see the closing lemma strategy in Pugh-Robinson's paper \cite[\textsection 9(a)]{PughRob}), the arguments stated in \cite{ABC} can be adapted to the Hamiltonian case.
At the Appendix, we give a brief description of the necessary modifications in \cite{ABC} in order to obtain the proof of the Connecting Lemma.

\section{Proof of the Theorems}\label{mainseclemma}

\subsection{Proof of Theorem \ref{translemma}}

Let $\mathcal{R}_0$ be the residual set given by Robinson's theorem (see Theorem \ref{robinson}) and let $\mathcal{O}$ be  as in Lemma \ref{lemmaregular}. Fix $H_0\in \mathcal{R}:=\mathcal{R}_0\cap\mathcal{O}$ and $I \subset \mathcal{S}(H_0)$ be a compact set. Let $\mathcal{V}$ be a small $C^2$-neighborhood of $H_0$  such that  the Hamiltonian level $(H,e)$ is regular for all $H\in\mathcal{V}$ and $e\in I$. For $H\in\mathcal{V}$ and $e\in I$, let $\mathcal{E}_{H,e}$ denote the analytic continuation of $\mathcal{E}_{H_0,e}$.

Consider a countable basis of open sets on $M$, $\left\{U_{n}\right\}_n$, and, for $m, n \in \mathbb{N}$, define the set $\mathcal{P}_{n,m}$ by the following condition: $H \in \mathcal{P}_{n,m}$ if and only if for any $e\in I$ one has 
\begin{equation}\label{cond}
\left[\cup_{t>0}X_H^t(U_{n}\cap \mathcal{E}_{H,e})\right] \cap (U_{m}\cap\mathcal{E}_{H,e})\neq \emptyset.
\end{equation}
We claim that $\mathcal{P}_{n,m}$ is a $C^2$-open set. In fact let us fix $H\in \mathcal{P}_{n,m}$. For $e\in I$ there are open sets $V_e$ and $\mathcal{U}_e$ such that (\ref{cond}) holds for any $\tilde H\in \mathcal{U}_e$ and every $\tilde e\in V_e$. As the family $\{V_e\}_{e\in I}$ is an open cover of $I$, by compactness, we can chose $e_1$, ..., $e_n$ in $I$ such that $\{V_{e_i}\}_{i=1}^n$ is a finite sub cover of $I$. We define the open set $\mathcal{U}_H:=\cap_{i=1}^n\mathcal{U}_{e_i}$. It is clear that $H\in \mathcal{U}_H\subset \mathcal{P}_{n,m}$, thus $\mathcal{P}_{n,m}$ is open.
Now, we define the residual set
$$
\mathcal{R}_{H_0}:=\mathcal{V}\cap\mathcal{R}\cap\bigcap_{n,m\in\mathbb{N}}\bigl(\mathcal{P}_{n,m}\cup(\overline{\mathcal{P}_{n,m}})^c\bigr),
$$ 
where, given a set $A$, $\bar{A}$ stands for its closure and $A^c$ for its complementary. Observe that, fixed $U_n$ and $U_m$, for any $H$, $C^2$-robustly either $H\in \mathcal{P}_{n,m}$ or $H\in (\overline{\mathcal{P}_{n,m}})^c$. 

It remains to show that $\mathcal{E}_{H,e}$ is transitive for any $ H\in \mathcal{R}_{H_0}$ and any $e\in I$. So, by contradiction, assume that there are $H$ and $e$ such that $\mathcal{E}_{H,e}$ is not transitive. Therefore, there are open sets $A, B\in M$ such that $A\cap\mathcal{E}_{H,e}\not=\emptyset$, $B\cap\mathcal{E}_{H,e}\not=\emptyset$ and:
$$
\left[X_H^t(A\cap \mathcal{E}_{H,e})\right] \cap (B\cap\mathcal{E}_{H,e})= \emptyset,
$$
for all $t\in\mathbb{R}$. Since $\left\{U_{n}\right\}_n$ is a basis of the topology it follows that there are $m,n\in\mathbb{N}$ such that for all $t\in\mathbb{R}$
$$
\left[X_H^t(U_n\cap \mathcal{E}_{H,e})\right] \cap (U_m\cap\mathcal{E}_{H,e})= \emptyset,
$$
i.e. $H\in\mathcal{P}_{n,m}^c\cap \mathcal{R}_{H_0}$. Hence, we have $H\in (\overline{\mathcal{P}_{n,m}})^c$.

Choose $x\in U_{n}\cap \mathcal{E}_{H,e}$ and $y\in U_{m}\cap \mathcal{E}_{H,e}$.
By Remark \ref{remorbit}, $x,y$ are connected by an $\epsilon$-pseudo-orbit in $\mathcal{E}_{H,e}$, for any $\epsilon>0$. 
Moreover, since $H\in\mathcal{R}$, we can apply the Connecting Lemma for pseudo-orbits of Hamiltonians.
So, for any $C^2$-neighborhood $\mathcal{U}$ of $H$, there exists  $\tilde{H}\in\mathcal{U}\cap\mathcal{R} \cap\overline{(\mathcal{P}_{n,m})}\:^c$ such that $e=\tilde{H}(x)$ and there is $T>0$ such that $X^T_{\tilde{H}}(x)=y$ on $\mathcal{E}_{\widetilde{H},{e}}$. 
Then $\tilde{H}\in\mathcal{P}_{n,m}$, which is a contradiction. 
Hence $H\in\mathcal{P}_{n,m}$, for all integers $n$ and $m$.
Therefore, $(H,e)$ is transitive, for any $H\in\mathcal{R}_{H_0}$.

\subsection{Proof of Theorem~\ref{cor:density} }\label{density}

This subsection is devoted to the proof of Theorem~\ref{cor:density} concerning estimates of the measure
of energy levels with transitivity.
Let $\varepsilon>0$ be given and $\mathcal{R}$ be the residual set given by Theorem~\ref{translemma}
(in particular $\mathcal{R}\subset \mathcal O$ given by Lemma~\ref{lemmaregular}).
For any $H_0 \in \mathcal{R}$ take a compact set of energy levels $I_{H_0,\varepsilon} 
\subset \mathcal S(H_0)$ with the property that 
$$
\text{Leb}(H_0(M) \setminus I_{H_0,\varepsilon}) <{\varepsilon}^2 \; \text{Leb}(H_0(M))
$$
(or equivalently 
$\text{Leb}(I_{H_0,\varepsilon}) > ( 1- {\varepsilon}^2) \; \text{Leb}(H_0(M))$).
Thus, by Theorem~\ref{translemma} there exists a $C^2$ neighborhood $\mathcal{V}$ of $H_0$ 
and a residual subset $\mathcal{R}_{H_0}$ of $\mathcal{V}$ (both depending on $H_0$ and $\varepsilon$)
such that for any $H\in \mathcal{R}_{H_0}$ and $e\in I_{H_0,\varepsilon}$ the Hamiltonian level $(H,e)$ is transitive. We may assume without loss of
generality (reducing the neighborhood if necessary) that it holds additionally that the symmetric difference satisfies
$$
\text{Leb} ( H(M) \triangle H_0(M)) 
	\le \varepsilon \, \text{Leb} (H_0(M))
$$ 
for every $H\in \mathcal{V}$.
Consequently, if $H\in \mathcal{R}_{H_0}$ we conclude 
\begin{equation}\label{lebeg}
\frac{ \text{Leb} \big(e\in H(M) \colon (H,e) \text{ is transitive} \big) }{ \text{Leb} ( H(M))  } 
	 \ge \frac{ \text{Leb} \big( I_{H_0,\varepsilon} \big) }{ \text{Leb} ( H(M))  } 
	 > \frac{( 1- {\varepsilon}^2) \; \text{Leb}(H_0(M))}{( 1+ {\varepsilon}) \; \text{Leb}(H_0(M))} 
	 = 1-\varepsilon.
\end{equation}
Observe that any element in the residual subset  
$$
\mathcal{R}_\varepsilon= \bigcup_{H_0 \in \mathcal R} \mathcal{R}_{H_0} 
	\subset C^2(M,\mathbb R)
$$
satisfies (\ref{lebeg}).  Now, we just consider the residual subset $\mathcal R =\bigcap_{n\in\mathbb{N}} \mathcal R_{1/n}$ in $C^2(M,\mathbb{R})$ which clearly satisfies the statement of the theorem.

\subsection{Proof of Theorem \ref{homoclinicorol} }\label{maincproof}
In ~\cite[\S5.1]{ABC} it is proved that for generic symplectomorphisms there is only a single homoclinic class using that generic symplectomorphisms are transitive. Therefore, we could try to adopt the same strategy to obtain Theorem~\ref{homoclinicorol} from Theorem~\ref{translemma}. Unfortunately, the argument fails when we consider the analytic continuation of the hyperbolic periodic point (defining the homoclinic class) which could be inside a \emph{non-transitive} energy level. For this reason we must follow a different approach.

Besides, we could also try to use the lower semicontinuity of the homoclinic class referred also in ~\cite[pp. 1431]{ABC}. Unfortunately, the standard argument, which is typical in robust transitive dynamics, of using twice the connecting lemma to glue different homoclinic classes cannot be done since the perturbation given by the connecting lemma (see proof of Claim 1 below) may originate a Hamiltonian without nice properties like transitivity in a given energy level.

\begin{proof}[Proof of Theorem ~\ref{homoclinicorol}]

Let $\mathcal{KS}\subset C^2(M,\mathbb{R})$ be the $C^2$ residual subset of \emph{Kupka-Smale Hamiltonians} given by ~\cite{R0}, i.e., for all $H\in \mathcal{KS}$ all closed orbits are hyperbolic or elliptic. Note that, by Birkhoff fixed point theorem, the hyperbolic orbits on $M$ are dense (cf. \cite{N} Proposition 3.1, Corollary 3.2 and \S6). Indeed, borrowing Pugh-Robinson's generic Fubini argument (see  \cite[pp. 312]{PughRob}), Newhouse's result can be strengthened to: the generic compact energy hypersurface of the generic $C^2$ Hamiltonian contains a dense set of closed hyperbolic points.  We denote by $\mathcal{N}\subset C^2(M,\mathbb{R})$ this previous $C^2$-generic set of Hamiltonians and by $\mathcal{T}_1$ the residual given by Theorem~\ref{translemma}. We claim that $\mathcal{R}:=\mathcal{KS}\cap \mathcal{N}\cap\mathcal{T}_1$ is a residual subset in the conditions of Theorem~\ref{homoclinicorol}. Given $H_0\in\mathcal{R}$ let $e$ be in the intersection of the open and dense set given by Theorem~\ref{translemma} and the residual set of energies of Newhouse's theorem. Considering $I:=\{e\}$ in Theorem~\ref{translemma}, there is a $C^2$ neighborhood $\mathcal{V}$ of $H_0$ and a residual subset $\mathcal{R}_{H_0}$ of $\mathcal{V}$ (depending also on $I$) such that for any $H\in \mathcal{R}_{H_0}$ we have that:  
\begin{itemize}
\item [(i)]  the Hamiltonian level $(H,e)$ is transitive;
\item [(ii)] the hyperbolic closed orbits are dense in $(H,e)$ and
\item [(iii)] the hyperbolic closed orbits have constant index equal to $d$.
\end{itemize}

The proof of Theorem~\ref{homoclinicorol} rely on the next result which can be understood as a sharper version of Robinson's Kupka-Smale theorem (\cite{R0}) with the presence of transitivity, demanding the intersections of stable/unstable manifolds of hyperbolic closed orbits to be transversal and \emph{non-empty}.

\begin{proposition}\label{p1}
There exists a residual subset $\tilde{\mathcal{R}}_{H_0}$ of $\mathcal{V}$ such that for all $H\in\tilde{\mathcal{R}}_{H_0}$ and any hyperbolic closed orbits $\gamma_1,\gamma_2\in\mathcal{E}_{H,e}$ we have $W^u_H(\gamma_1)\pitchfork W^s_H(\gamma_2)\not=\emptyset$.
\end{proposition}

\begin{proof}
Consider the set $\mathcal{R}_{k,\ell} $ of Hamiltonians $H\in \mathcal{V}$ such that for all hyperbolic closed points $\gamma_1,\gamma_2\in\mathcal{E}_{H,e}$ with period $<k$:
\begin{itemize}
\item [(a)] $W^u_{H,\ell}(\gamma_1)\pitchfork W^s_{H,\ell}(\gamma_2)$ (here $W^u_{H,\ell}(\gamma_1)$ denotes the unstable manifold of $\gamma_1$ with size $\ell$) and  
\item [(b)] $W^u_{H}(\gamma_1)\cap W^s_{H}(\gamma_2)\not=\emptyset$ and intersect transversely at some point.
\end{itemize} 

We claim that $\mathcal{R}_{k,\ell}$ is $C^2$-open and dense in $\mathcal{V}$. The $C^2$-openess is clear from the transversality property, continuity of local stable/unstable manifolds and the constant index property in (iii). 
To prove the $C^2$-denseness we assume by contradiction that there exists an open set $\mathcal{W}\subset \mathcal{V}$ such that $\mathcal{R}_{k,\ell}\cap\mathcal{W}=\emptyset$. Since (a) is a $C^2$-open and dense property we can take $H_1\in \mathcal{W}\cap \mathcal{R}_{H_0}$ and hyperbolic closed points $\gamma_1,\gamma_2\in\mathcal{E}_{H,e}$ of period $<k$ satisfying (a) but not (b). Indeed, we can take a smaller neighborhood $\tilde{\mathcal{W}}\subset \mathcal{W}$ such that every $H_1\in \tilde{\mathcal{W}}\cap \mathcal{R}_{H_0}$  and hyperbolic closed points $\gamma_1,\gamma_2\in\mathcal{E}_{H,e}$ of period $<k$ satisfies (a) but not (b). By property (i) above and the Connecting Lemma for Hamiltonians in \cite[Theorem E]{WX} there exists $H_2\in \tilde{\mathcal{W}}$ such that $W^u_{H_2}(\gamma_1)\cap W^s_{H_2}(\gamma_2)\not=\emptyset$ and by another perturbation we find $H_3\in \tilde{\mathcal{W}}$ such that $W^u_{H_3}(\gamma_1)\cap W^s_{H_3}(\gamma_2)\not=\emptyset$ and transversal at some point which is a contradiction.

Just consider the residual subset 
$\tilde{\mathcal{R}}_{H_0}:=\bigcap_{k\in \mathbb{N}}\bigcap_{\ell\in \mathbb{N}} \mathcal{R}_{k,\ell}$
of $\mathcal{V}$. 
Finally, if $H\in \tilde{\mathcal{R}}_{H_0}$ then for any hyperbolic closed points $\gamma_1,\gamma_2\in\mathcal{E}_{H,e}$ we have $W^u_H(\gamma_1)\pitchfork W^s_H(\gamma_2)\not=\emptyset$.
\end{proof}

In conclusion, by property (ii) the hyperbolic closed orbits are dense in $\mathcal{E}_{H,e}$, thus the closure of all hyperbolic closed orbits in $\mathcal{E}_{H,e}$ is the whole hypersurface $\mathcal{E}_{H,e}$. Then, Proposition~\ref{p1} assure that any two hyperbolic closed orbits in $\mathcal{E}_{H,e}$ are homoclinically related and Theorem~\ref{homoclinicorol} is proved.

\end{proof}
\subsection{Proof of Theorem \ref{maintheorem}}

In this section, we state the proof of two auxiliary results for Hamiltonian systems defined on a $2d$-dimensional symplectic manifold, for $d\geq2$. The first one (Lemma~\ref{pastingHamilt}) is a version of the $C^1$-Pasting Lemma for Hamiltonians. 
Actually, in the Hamiltonian setting, the proof of this result is much more simple. The second one (Lemma \ref{periodlemma}) asserts that, $C^2$-generically, the quotient between the period of two distinct closed orbits of a Hamiltonian is irrational.


\begin{lemma}(Pasting Lemma for Hamiltonians)\label{pastingHamilt}
Fix $H_1 \in C^r(M,\mathbb{R})$, $2\leq r\leq \infty$, and let $K$ be a compact subset of $M$ and $U$ a small neighborhood of $K$. Given $\epsilon>0$, there exists $\delta>0$ such that if $H_2\in C^{s}(M,\mathbb{R})$, for $2\leq s \leq \infty$, is $\delta$-$C^{\ell}$-close to $H_1$ on $U$ (where $\ell:=\min\left\{r,s\right\}$) then there exist $H_3\in C^{s}(M,\mathbb{R})$ and a closed set $V$ such that: (i) $K\subset V\subset U$, 
(ii) $H_3=H_2$ on $V$, (iii) $H_3=H_1$ on $U^c$, and
(iv) $H_3$ is $\epsilon$-$C^{\ell}$-close to $H_1$.
\end{lemma}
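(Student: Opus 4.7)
The plan is to reduce the statement to a standard smooth cut-off (bump function) argument. The essential simplification over the vector-field setting is that Hamiltonians are just scalar-valued functions, so nothing like a divergence-free constraint has to be preserved by the interpolation; convex combinations of Hamiltonians are Hamiltonians.

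First, I would build an intermediate closed set. Since $K$ is compact and $U$ is an open neighborhood of $K$, one can choose $\eta>0$ so small that the closed $\eta$-neighborhood $V:=\{x\in M : d(x,K)\leq \eta\}$ is contained in $U$, and similarly a slightly smaller closed set $V'$ with $K\subset \operatorname{int}(V')\subset V'\subset \operatorname{int}(V)$. In particular $V$ is closed, $K\subset V\subset U$, and there is room between $V'$ and $\partial V$ to smoothly interpolate.

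Second, I would fix, once and for all, a $C^\infty$ bump function $\rho\colon M\to[0,1]$ (depending only on $H_1$, $K$, $U$) such that $\rho\equiv 1$ on a neighborhood of $V$ and $\operatorname{supp}(\rho)\subset U$. Such a $\rho$ exists by the usual partition-of-unity construction, and its $C^{\min\{r,s\}}$-norm is a finite constant $C_\rho>0$ depending only on the chosen geometry. Now define
\begin{equation*}
H_3 \;:=\; H_1 + \rho\cdot (H_2 - H_1).
\end{equation*}
By construction $H_3\equiv H_2$ on the neighborhood of $V$ where $\rho=1$ (so in particular $H_3=H_2$ on $V$), while $H_3\equiv H_1$ on $U^{c}$ because $\rho$ vanishes there. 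Since $H_1\in C^r$, $H_2\in C^s$, and $\rho\in C^\infty$, the resulting $H_3$ is of class $C^{\min\{r,s\}}$ globally (which is what the conclusion really requires, given that $H_3$ must coincide with $H_1$ on $U^{c}$).

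Third, for the quantitative closeness, I would apply the Leibniz rule on $U$ (outside $U$ the difference vanishes):
\begin{equation*}
\|H_3 - H_1\|_{C^{\min\{r,s\}}} \;=\; \|\rho\,(H_2-H_1)\|_{C^{\min\{r,s\}}} \;\leq\; C_\rho\,\|H_2-H_1\|_{C^{\min\{r,s\}}(U)}.
\end{equation*}
Hence choosing $\delta:=\epsilon/C_\rho$ at the outset makes $H_3$ automatically $\epsilon$-$C^{\min\{r,s\}}$-close to $H_1$ whenever the hypothesis on $H_2$ holds. All four bullet points in the conclusion are then satisfied with this $V$ and $H_3$.

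In contrast with the $C^1$-Pasting Lemma for conservative vector fields in \cite{AM}, there is no genuine obstacle here: there is no conservation law to restore after the cut-off, because the flow-invariance of the symplectic form is built into the correspondence $H\mapsto X_H$ automatically. The only mild care needed is to make the choice of $\rho$ before choosing $\delta$, so that $C_\rho$ does not blow up and the implication $\delta\to\epsilon$ is uniform; this is the one spot where a reader might want more detail, but it is routine.
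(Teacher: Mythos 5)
Your proposal is correct and takes essentially the same route as the paper: both define $H_3 = H_1 + \rho\,(H_2-H_1)$ with a fixed cut-off (the paper uses $\rho=\alpha_1$ from a smooth partition of unity subordinate to $\{U,U_2\}$, with $U_2$ disjoint from $K$, and sets $V:=U_2^{c}$). Your Leibniz-rule estimate with the explicit constant $C_\rho$ and the choice $\delta=\epsilon/C_\rho$ is in fact stated more carefully than the paper's, which writes $\|\alpha_1(H_2-H_1)\|_{C^{\min\{r,s\}}}\leq\max\alpha_1\cdot\|H_2-H_1\|_{C^{\min\{r,s\}}}$ — an inequality that is not literally valid for $C^k$-norms with $k\geq 1$, though its conclusion is salvaged by precisely the fixed constant you make explicit.
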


\begin{proof}
Consider $\left\{U_1, U_2\right\}$ an open cover of $M$, such that $U_1:=U$ and $U_2$ does not contain $K$. Then, there is a $C^\infty$ smooth partition of unity $\left\{\alpha_1, \alpha_2\right\}$, subordinate to $\left\{U_1, U_2\right\}$, such that $\alpha_i:M\rightarrow \left[0,1\right]$ satisfies $supp(\alpha_i)\subseteq U_i$, for $i=1,2$, and $\alpha_1(x)+\alpha_2(x)=1$, for any $x \in M$.
Letting $V:=U_2^c$ and $H_3:=\alpha_1H_2+(1-\alpha_1)H_1$, it is clear that $K\subset V\subset U$,
$H_3=H_2$ on $V$ (since $\alpha_1(x)=1$ and $\alpha_2(x)=0$, for any $x\in V$) and 
$H_3=H_1$ on $U^c$ (since $\alpha_1(x)=0$ and $\alpha_2(x)=1$, for any $x\in U^c$).
Finally, observe that 
$$
\left\|H_3-H_1\right\|_{C^{\ell}}
	\leq \sum_{k=0}^{\ell}\left(\begin{array}{c}\ell\\k\end{array}\right)\|\alpha_1\|_{C^{\ell}}\left\|H_2-H_1\right\|_{C^{\ell}}\leq\sum_{k=0}^{\ell}\left(\begin{array}{c}\ell\\k\end{array}\right)\|\alpha_1\|_{C^{\ell}}\delta<\epsilon
$$
where we used the general Leibniz rule and that $\delta=\delta(\epsilon,\ell)$ can be chosen to satisfy the right-hand side inequality.

\end{proof}

\begin{lemma}\label{periodlemma}
There is a residual $\mathcal{R}$ in $C^2(M,\mathbb{R})$ such that, for any $H\in\mathcal{R}$, any distinct closed orbits for $H$, $\gamma_1$ and $\gamma_2$, with periods $\ell_1$ and $\ell_2$ (respectivelly), satisfy $\dfrac{\ell_{1}}{\ell_{2}}\in\mathbb{R}\backslash\mathbb{Q}.$
\end{lemma}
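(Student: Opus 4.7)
The plan is to realize $\mathcal{R}$ as a countable intersection of $C^2$-open dense sets and apply Baire's theorem. Enumerate pairs $(m,n)$ of coprime positive integers together with integers $k\geq 1$ and define
$$\mathcal{A}_{k,m,n} = \bigl\{H \in C^2(M,\mathbb{R}) : \text{every two distinct } p,q\in Per(H) \text{ with } \pi_p,\pi_q\leq k \text{ satisfy } n\pi_p\neq m\pi_q\bigr\}.$$
Any $H$ in $\bigcap_{k,m,n}\mathcal{A}_{k,m,n}$ violates every rational equation $n\pi_p=m\pi_q$ between periods of distinct closed orbits, which is exactly the statement of the lemma.

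For openness, first intersect with Robinson's residual set $\mathcal{R}_0$ from Theorem \ref{robinson}: in $\mathcal{R}_0$, no eigenvalue of the transversal Poincar\'e return map at a closed orbit is a root of unity (in particular not $1$, since $\sigma_i=1$ is a non-trivial resonance relation in the sense of the definition preceding Theorem \ref{robinson}), so every closed orbit is non-degenerate. By the Implicit Function Theorem, each such orbit persists under $C^2$-small perturbations of $H$ with continuously varying period, and the finitely many closed orbits of period $\leq k$ admit a continuous labeling. The strict inequalities $n\pi_p\neq m\pi_q$ are therefore preserved, which makes $\mathcal{A}_{k,m,n}$ open relative to $\mathcal{R}_0$.

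For density, fix $H\in\mathcal{R}_0$ and let $\gamma_1,\ldots,\gamma_N$ be its closed orbits of period $\leq k$, with $e_i=H(\gamma_i)$. For each $i$, pick $p_i\in\gamma_i$ off the other $\gamma_j$'s and a small Darboux neighborhood $U_i$ of $p_i$ disjoint from $\gamma_j$, $j\neq i$; choose pairwise disjoint flowboxes inside the $U_i$ and a bump $\beta_i\colon M\to[0,1]$ supported in the $i$-th flowbox with $\beta_i\equiv 1$ on a sub-flowbox containing an arc of $\gamma_i$. For small parameters $\epsilon=(\epsilon_1,\ldots,\epsilon_N)$ set
$$\tilde H_\epsilon = H + \sum_{i=1}^N \epsilon_i\,\beta_i\,(H-e_i).$$
Since $\beta_i(H-e_i)\equiv 0$ on $\{H=e_i\}$, the orbit $\gamma_i$ lies on $\{\tilde H_\epsilon=e_i\}$, and a direct computation yields $X_{\tilde H_\epsilon}|_{\{H=e_i\}} = (1+\epsilon_i\beta_i)X_H$ inside the support of $\beta_i$. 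Hence $\gamma_i$ is still a closed orbit of $\tilde H_\epsilon$, traversed at a rescaled speed, with new period
$$\tilde\pi_i(\epsilon_i) = \int_0^{\pi_i} \frac{dt}{1+\epsilon_i\,\beta_i(\gamma_i(t))},$$
a smooth function strictly monotone at $\epsilon_i=0$ with derivative $-\int_0^{\pi_i}\beta_i(\gamma_i(t))\,dt<0$. Because the supports are disjoint, the map $\epsilon\mapsto(\tilde\pi_1(\epsilon_1),\ldots,\tilde\pi_N(\epsilon_N))$ is a local diffeomorphism of $\mathbb{R}^N$ near the origin, and the set where $n\tilde\pi_i=m\tilde\pi_j$ for some $i\neq j$ is a finite union of affine hyperplanes, thus Lebesgue null. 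Generic arbitrarily small $\epsilon$ therefore produces $\tilde H_\epsilon\in\mathcal{A}_{k,m,n}$ arbitrarily $C^2$-close to $H$.

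The main obstacle is controlling the $C^2$-size of the perturbation against the resulting period shift: a straightforward scaling gives $\|\tilde H_\epsilon-H\|_{C^2}=O(\epsilon/\rho)$ for flowbox radius $\rho$, with period change of order $\epsilon\rho$, so one shrinks $\rho$ first (to isolate $p_i$ from the other orbits) and then chooses $\epsilon$ of the appropriate order to make the perturbation small in $C^2$ and still move $\tilde\pi_i$ off the finitely many bad values. A secondary point is that small enough $\epsilon$ introduces no new closed orbits of period $\leq k$; this follows from stability of the finite list of non-degenerate closed orbits under $C^2$-small perturbations, obtained by a standard compactness argument inside $\mathcal{R}_0$.
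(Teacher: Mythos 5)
Your overall skeleton is the same as the paper's: invoke Robinson's residual set to make short closed orbits finitely many and robust, stratify the bad rational relations into countably many conditions, and use Baire category with a density-by-perturbation argument. The paper's density step is, however, implemented differently from yours: it rescales the Hamiltonian globally to $\overline{H}_i=\tfrac{1}{1+s_i}H_2$ (so that $X_{\overline{H}_i}=\tfrac{1}{1+s_i}X_{H_2}$ and the period of $\gamma_i$ becomes $(1+s_i)\pi_i$), and then glues $\overline{H}_i$ to $H_2$ in a tubular neighborhood of each orbit via the Pasting Lemma (Lemma \ref{pastingHamilt}). You instead construct the perturbation directly as $\tilde H_\epsilon=H+\sum_i\epsilon_i\beta_i(H-e_i)$ supported in small flowboxes; the identity $X_{\tilde H_\epsilon}=(1+\epsilon_i\beta_i)X_H$ on $\{H=e_i\}$ is correct and gives the period as $\int_0^{\pi_i}(1+\epsilon_i\beta_i\circ\gamma_i)^{-1}\,dt$, which has nonzero derivative in $\epsilon_i$. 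Your route is more explicit, avoids invoking the Pasting Lemma, and has the additional virtue of keeping $\gamma_i$ on the \emph{same} energy level $\{H=e_i\}$ (the paper's rescaled Hamiltonian shifts the energy of $\gamma_i$ to $e_i/(1+s_i)$, which is harmless for the statement but less tidy). One point worth tightening: you assert that $\mathcal{A}_{k,m,n}$ is ``open relative to $\mathcal{R}_0$'' and then intersect over $(k,m,n)$; since $\mathcal{R}_0$ is only residual, you should spell out that each $\mathcal{A}_{k,m,n}\cap\mathcal{R}_0$ can be written as $U_{k,m,n}\cap\mathcal{R}_0$ with $U_{k,m,n}$ genuinely $C^2$-open and dense, so that $\bigcap U_{k,m,n}\cap\mathcal{R}_0$ is residual (the paper sidesteps this by arranging $\mathcal{A}_n$ and $\mathcal{B}_n$ to be open outright). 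Both your argument and the paper's quietly ignore the possibility of orbits of period slightly above the threshold dropping below it under perturbation, so this is not a relative defect of your version.
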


\begin{proof}
Fix $n\in\mathbb{N}$. 
By Robinson's results ~\cite{R0}, the set $\mathcal{A}_n$ of $H\in C^2(M,\mathbb{R})$ such that its singularities are hyperbolic and its closed orbits of period smaller than $n$ are hyperbolic or elliptic, is open and dense in $C^2(M,\mathbb{R})$.

Let $\{r_i\}_{i=1}^{\infty}$ be an enumeration of the positive rational numbers, with a fixed order. 
Then consider the open and dense set $\mathcal{B}_n$ of Hamiltonians $H\in\mathcal{A}_n$ such that for any two distinct closed orbits $\gamma_1$ and $\gamma_2$ of period smaller than $n$ the following holds $\frac{\ell_{1}}{\ell_{2}}\notin \{r_i\}_{i=1}^n$.

Now, this proof follows the ideas stated in the proof of \cite[Lemma 2.2]{B1}, but using the version of the Pasting Lemma for Hamiltonians, proved in Lemma \ref{pastingHamilt}. 

Fix $\epsilon>0$ and $H_1\in C^2(M,\mathbb{R})$.
By density of $\mathcal{A}_n$, there is $H_2\in\mathcal{A}_n$, $\epsilon$-$C^2$-close to $H_1$. Since $H_2\in\mathcal{A}_n$ the closed orbits with period less than $n$ of $H_2$ are isolated. 
So, they are only finitely many, say $\{\gamma_i\}_{i=1}^m$, for fixed $m\in\mathbb{N}$.

Given a positive sequence $\{s_i\}_{i=1}^m$, the vector field $X_{\overline{H}_i}=\frac{1}{s_i+1}X_{H_2}$ is also a Hamiltonian vector field, for any $1\leq i\leq m$. Actually, by (\ref{formula}), $X_{\overline{H}_i}$ is associated to the Hamiltonian $\frac{1}{s_i+1}H_2$. 
Observe that if we choose $s_i$ arbitrarily close to $0$ then $\overline{H}_i$ is $\epsilon$-$C^2$-close to $H_2$.

For any $1\leq i\leq m$, consider tubular compact neighborhoods $K_i$ of $\gamma_i$, sufficiently small such that some open neighborhoods $\mathcal{W}_i$ of $K_i$ are pairwise disjoint.
The idea now is to apply, recursively $m$ times, Lemma \ref{pastingHamilt}, in order to define $\tilde{H}_m\in C^2(M,\mathbb{R})$ such that $\ell_{{\tilde{H}_m,\gamma_i}}=(1+s_i)\ell_{{H_2,\gamma_i}}$, for $1\leq i\leq m$. Moreover, choosing $s_i$ sufficiently small $\tilde{H}_m$ can be taken $C^2$-arbitrarily close to $H_2$, hence belonging to $\mathcal{A}_n$.
Up to diminishing, if necessary, $\{s_i\}_{i=1}^m$, we may assume without loss of generality that $\frac{\ell_{\tilde{H}_m,\gamma_i}}{\ell_{\tilde{H}_m,\gamma_j}}\notin \{r_i\}_{i=1}^n$, for $i\neq j$ and also that $\tilde{H}_m\in\mathcal{A}_n$. Thus,  $\tilde{H}_m\in\mathcal{B}_n$. Since $\mathcal{B}_n$ is open and dense in $C^2(M,\mathbb{R})$, for any $n\in\mathbb{N}$, the desired residual subset of $C^2(M,\mathbb{R})$ is given by $\mathcal{R}:=\cap_{n\in\mathbb{N}}\mathcal{B}_n$.
\end{proof}

\begin{proof}[Proof of of Theorem \ref{maintheorem}]
Let $\mathcal{R}_0$ be the residual set given by Lemma \ref{periodlemma} and $\mathcal{R}_3$ be the residual set given by Theorem \ref{homoclinicorol}.
Define $\mathcal{R}:=\mathcal{R}_0\cap\mathcal{R}_3$. Fix $H_0\in\mathcal{R}$. Since $H_0\in\mathcal{R}_3$, by Theorem \ref{homoclinicorol}, for a generic energy $e$ in $H_0(M)$ there is a $C^2$ neighborhood $\mathcal{V}$ of $H_0$ and a residual subset $\mathcal{R}_{H_0}$ of $\mathcal{V}$ such that for any $H\in \mathcal{R}_{H_0}$ any energy hypersurface of $H^{-1}(\{e\})$ is a homoclinic class. So, to conclude the proof of Theorem \ref{maintheorem}, we just have to prove that the Hamiltonian level $(H,e)$ is topologically mixing.

Let $\mathcal{E}_{H,e}$ be an energy hypersurface of $H^{-1}(\{e\})$. Let us prove that $\mathcal{E}_{H,e}$ is topologically mixing, that is, for any open, nonempty subsets $U$ and $V$ of $\mathcal{E}_{H,e}$, there is $\tau\in\mathbb{R}$ such that $X^t_H(U)\cap V\neq\emptyset$, for any $t\geq\tau$.

Since hyperbolic closed orbits are dense in the homoclinic class and the index is constant and equal to $d$, we can find two different hyperbolic closed orbits $\gamma_1$ and $\gamma_2$ of $H$, with period $\ell_{1}$ and $\ell_{2}$, such that $ind(\gamma_1)=ind(\gamma_2)=d$ and $\gamma_1\cap U\neq\emptyset$ and $\gamma_2\cap V\neq\emptyset$. 
Moreover, since $H\in\mathcal{R}_0$, we have that $\dfrac{\ell_{1}}{\ell_{2}}\in\mathbb{R}\backslash\mathbb{Q}$.

Fix $x\in \gamma_1\cap U$, $y\in\gamma_2\cap V$ and $z\in W^u(\gamma_1)\cap W^s(\gamma_2)$.
Thus, there is $\tau_1>0$ such that $X_H^{-(\tau_1+m\ell_{1})}(z) \in W^u(x)$ for every $m\in\mathbb{N}$ and
 $\displaystyle\lim_{m\rightarrow+\infty} X_H^{-(\tau_1+m\ell_{1})}(z)=x$.
Then, there is $t_1>0$ such that $X_H^{-(t_1+m\ell_{1})}(z)\in U$ and, therefore, $z\in X_H^{t_1+m\ell_{1}}(U)$, for every $m\ge N_1$.
Similarly, there is $t_2>0$ and a small $\epsilon>0$ such that $X_H^{t_2+n\ell_{2}+s}(z)\in V$, for every  $\left|s\right|<\epsilon$ and $n\ge N_2$. 
From the transitivity of the future orbits of irrational rotations of the circle (cf. ~\cite[Lemma 2]{AAB}) and the fact that $\dfrac{\ell_{1}}{\ell_{2}}\in\mathbb{R}\backslash\mathbb{Q}$, the set 
$\{m\ell_{1}+n\ell_{2}+s: m\ge N_1, n\ge N_2, \left|s\right|<\epsilon\}$  contains an interval of the form $\left[T,+\infty\right)$, for some $T>0$. 
Hence, for any $t\geq t_1+t_2+T$, there are $m\ge N_1$, $n\ge N_2$ and $\left|s\right|<\epsilon$ such that $t=t_1+t_2+m\ell_{1}+n\ell_{2}+s$. 
Then, $X_H^{t_2+n\ell_{2}+s}(z)\in X_H^t(U)\cap V$, for any $t\geq t_1+t_2+T$. 
So, $\mathcal{E}_{H,e}$ is a topologically mixing energy hypersurface.
Therefore, the Hamiltonian level $(H,e)$ is topologically mixing.
\end{proof}


\section*{ Appendix: Hamiltonian Connecting Lemma for pseudo-orbits}\label{CLPO}

This Appendix is quite technical and a more or less direct application of the arguments in \cite{ABC} with the conceptual adaptations and perturbations required for Hamiltonians. In what follows, and for the sake of completeness, we intend to give a short description of the proof explaining the essential steps and the main differences in our setting. We notice that we do not claim any novelty in the following approach. The ideas follows \cite{ABC} but the tools are, of course, with a Hamiltonian flavor.

As explained in \cite{ABC,BC, B1}, the proof of the \textsl{Connecting Lemma for pseudo-orbits} is divided in three main parts. The first step is to prove that the Connecting Lemma concerns on \textsl{local perturbations}. 
These perturbations motivate the definition of \textsl{perturbation boxes} whose support must be in the interior of small open sets, pairwise disjoint till a sufficiently large number of iterates.
Separately, we need to \textsl{analyze the dynamics near closed orbits with small period} because these orbits are not contained in any perturbation box. Finally, we must analyze the \textsl{global dynamics}, in order to cover any orbit with perturbation flowboxes.

\subsection{Local perturbations}

We proceed to describe the modifications of the local perturbation methods in the Hamiltonian context, namely
the lift perturbation process and selection of tilings adapted to pseudo-orbits.

\subsubsection*{Lift axiom}\label{liftsection}
Fix $p\in Per(H)$ and a small neighborhood $U_p$ of $p$.
By the Darboux Theorem (see, for example, \cite[Theorem 3.2.2]{Abraham}), there is a smooth symplectic change of coordinates $\varphi_p:U_p\rightarrow T_{p}M$, such that $\varphi_p(p)=\vec{0}$. Denote by $N_{p,\delta}$ the ball centered in $\vec{0}$ at the normal fiber at $p$ and with radius $\delta$. For a given $\delta>0$ depending on $p$ we let $f_H: \varphi^{-1}_{p}(N_{p,\delta})\rightarrow \varphi^{-1}_{X^1_H(p)}(N_{X^1_H(p)})$ be the canonical Poincar\'e time-one arrival associated to $H$, in fact, given a regular point $p$, we can chose any $\tau>0$ less than its period, if $p$ is periodic.
In \cite{PughRob}, when proving the closing lemma for Hamiltonians, Pugh and Robinson show that the \textsl{lift axiom} is satisfied for Hamiltonians, and they obtain the \emph{closing} from the \emph{lifting}. In short, \emph{lifting} is a way of pushing the orbit along a given direction by a small Hamiltonian perturbation $C^2$-close to the identity. We point out that we never have to push in the direction of increasing energies, i.e. it is possible to push only along the energy surface.
Furthermore, we recall the key point on the using of the $C^1$ topology of the Hamiltonian vector field: \emph{``...one can lift points $p$ in prescribed directions $v$ with results proportional to the support radius"} (\cite[pp. 266]{PughRob}). Allowing that the perturbations can be done in several flowboxes the proportionality constant can be made arbitrarily close to one.

\subsubsection*{Lift Axiom for Hamiltonians.} (cf.~\cite[\S 9 (a)]{PughRob})
Consider a Hamiltonian $H\in C^2(M,\mathbb{R})$ and let $\mathcal{U}$ be a $C^2$-neighborhood of $H$.
Then there are $0<\epsilon\leq1$ and a continuous function $\delta\colon M\setminus \emph{Sing}(X_H)\rightarrow(0,1)$, both depending on $H$ and on $\mathcal{U}$, such that, for any $p$ and $v\in N_{p,\delta(p)}\cap \varphi_p(H^{-1}(H(p)))$, there exists $\tilde{H}\in\mathcal{U}$ satisfying:
\begin{itemize}
	\item $f^{-1}_H\circ f_{\tilde{H}}(p)=\varphi^{-1}_p(\epsilon v)$;
	\item $supp(X_{\tilde{H}}-X_H)$ is contained in the flowbox $\mathcal{T}=\bigcup_{t\in(0,T)}X_H^t(B_{\|v\|}(p)),$
where $B_{\|v\|}(p)$ is taken in a transversal section of $p$ and $T=T(y)$ is such that $T(p)=1$ and $X_H^{T(y)}(y)\in B_{\|v\|}(X_H(p))$, for any $y\in B_{\|v\|}(p)$.
\end{itemize}

\medskip

\subsubsection*{Tiled sections and perturbation flowboxes}\label{flowboxsection}

Given a symplectic chart $\varphi\colon U\rightarrow\mathbb{R}^{2d}$, we say that the cross-section 
$\mathcal{C}$ to the flow on $\mathcal{E}_{H,e}$ on the chart $(U,\varphi)$ is a \emph{tiled section}
if
$\varphi(\mathcal{C}) \subset \mathbb{R}^{2d}$ is symplectomorphic to the standard cube in $\mathbb{R}^{2d-2}$, tilled by smaller cubes by homotheties and translations (see \cite[Fig. 1]{BC}). 
Write $\mathcal{C}=\displaystyle\cup_{k=1}^m \mathcal{T}_k$, with $m\in\mathbb{N}$, where each 
$\mathcal{T}_k$ is called a \textsl{tile of $\mathcal{C}$}.

\begin{figure}[htb]
     \includegraphics[height=4.4cm]{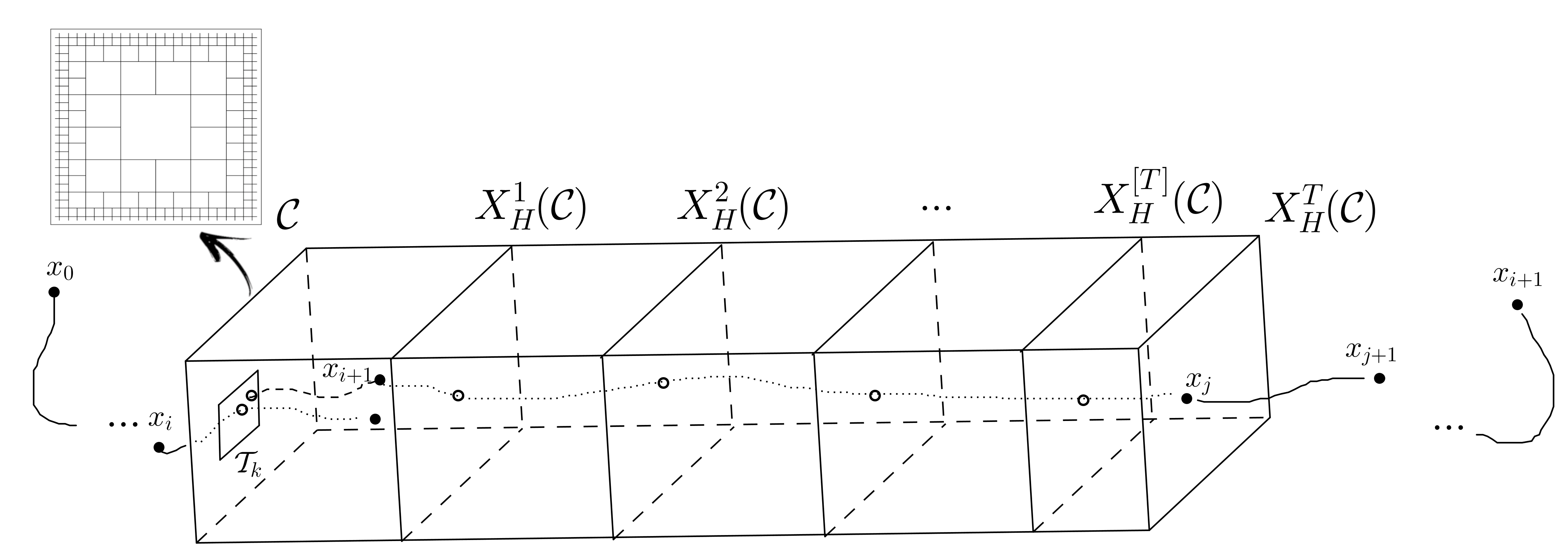}     
\caption{\sffamily Representation of a pseudo-orbit preserving the tiling.}
\end{figure}

\begin{definition}
Consider a Hamiltonian system $(H,e,\mathcal{E}_{H,e})$, a tiled section $\mathcal{C}=\displaystyle\cup_{k=1}^m \mathcal{T}_k$ on $\mathcal{E}_{H,e}$ and a constant $T>0$. 
We say the pseudo-orbit 
$\{x_i\}_{i=0}^n$ on $\mathcal{E}_{H,e}$ ($n\in\mathbb{N}$) \emph{preserves the tiling}
 in the injective flowbox $\mathcal{F}_H(\mathcal{C},T)=\displaystyle\cup_{t\in[0,T]}X^t_H(\mathcal{C})$ 
if $x_0,x_n\notin\mathcal{F}_H(\mathcal{C},T)$ and for any $i\in\{1,...,n-1\}$:
	\begin{itemize}
	\item if $X_H^{[0,1]}(x_i)\cap \mathcal{C}\in\mathcal{T}_k$
	for some $k\in\{1,...,m\}$,  then $X_H^{(-2,0)}(x_{i+1})\cap \mathcal{C}\in\mathcal{T}_k$ and
	\item if $X_H^1(x_i)\in X_H^{[1,T]}(\mathcal{C})$, then $x_{i+1}\in \mathscr{O}_H^+(x_i)$.
\end{itemize}

\end{definition}

This definition implies that the intersection of the pseudo-orbit $\{x_j\}_{j=0}^n$ with the  flowbox $\mathcal{F}_H(\mathcal{C},T)$ is an union of segments of orbits such that the segment $X^{[0,1]}(x_i)$ intersects the tilled cross-section $\mathcal{C}$, and $x_{i+k}$ ($k\geq 1$) belong to the orbit of $x_{i+1}$ while the segment $X^{[0,1]}(x_{i+k})$ intersects the flowbox $\mathcal{F}_H(\mathcal{C},T)$ (see Figure 1).
Moreover, as Pugh and Robinson explained in \cite[\S 9 (a)]{PughRob}, local perturbations on $H$ do not change the energy hypersurfaces in the boundary of the flowboxes where the perturbations take place.
So, we are allowed to push along energy levels. 

The Hayashi Connecting Lemma is a key ingredient to prove the Connecting Lemma for pseudo-orbits of Hamiltonians and, as stated in \cite{WX}, it can be adapted for Hamiltonians. In fact, following ~\cite[Th\'eor\`eme 5]{ABC}, we can extract a slightly stronger statement of the Connecting Lemma for Hamiltonians in \cite[Theorem E]{WX}, which can be seen as a theorem of existence of perturbation flowboxes:

\begin{theorem}\label{perturbprop}
Given a Hamiltonian system $(H,e,\mathcal{E}_{H,e})$ and a $C^2$-neigh\-bor\-hood $\mathcal{U}$ of $H$, there exists $T>0$ such that if $\mathcal{C}$ is a tiled section, then $\mathcal{F}_H(\mathcal{C},T)=\displaystyle\cup_{t\in[0,T]}X^t_H(\mathcal{C})$ is a perturbation flowbox of length $T$, that is:  for any pseudo-orbit $\{x_i\}_{i=0}^n$ on $\mathcal{E}_{H,e}$ preserving the tiling in $\mathcal{F}_H(\mathcal{C},T)$, there exist $\tilde{H}\in\mathcal{U}$ preserving the energy hypersurface, such that $\tilde{H}=H$ restricted to the energy hypersurface and outside $\mathcal{F}_H(\mathcal{C},T)$, and a pseudo-orbit $\{y_j\}_{j=0}^m$ on $\mathcal{E}_{\tilde{H},e}=\mathcal{E}_{H,e}$, with $m\in\mathbb{N}$, such that $y_0=x_0$ and $y_m=x_n$ and the intersection of the pseudo-orbit $\{y_j\}_{j=0}^m$ with $\mathcal{F}_H(\mathcal{C},T)$ is a segment of a true orbit of a point $y_j$ for $X^t_{\tilde{H}}$.
\end{theorem}

For notation simplicity we will call the set $supp(\mathcal{C})=\displaystyle\cup_{t\in[0,T]}X^t_H(closure(\mathcal{C}))$ the  \textsl{support of the perturbation flowbox $\mathcal{C}$} (inside $\mathcal{E}_{H,e}$). 
\subsection{Avoidable closed orbits and covering families}

Notice that the jumps of a pseudo-orbit have no reason to respect the tiling of some perturbation flowbox
and these are not definable for closed orbits with small period.
To deal with this difficulty, we introduce the concept of \textsl{avoidable closed orbits} and of \textsl{covering families}.

\medskip

\noindent\emph{Avoidable closed orbits:}
This kind of orbits are used to derive \textsl{perturbation flowboxes} with disjoint supports, in such a way that the pseudo-orbits stay away from closed orbits with small period.
We anticipate that, if $\mathcal{E}_{H,e}$ has no orbits with small period and all the closed orbits are uniformly avoidable, then we will be able to build a covering family of perturbation flowboxes for $\mathcal{E}_{H,e}$. The next definition is adapted from \cite[Definition 3.10]{ABC}. Consider a Hamiltonian system $(H,e,\mathcal{E}_{H,e})$ and a closed orbit $\gamma$ of $H$ on $\mathcal{E}_{H,e}$. Let $\mathcal{U}$ be a $C^2$-neigh\-bor\-hood of $H$ and fix $T>0$. 
We say a closed orbit $\gamma$ is \textsl{avoidable for ($\mathcal{U}, T$)}, if, for any neighborhood $V_0$ of $\gamma$ and for any $t>0$, there are $\epsilon>0$, open neighborhoods 
$W\subset V\subset V_0$ of the closed orbit $\gamma$ in $\mathcal{E}_{H,e}$, and 
a family of $\mathcal C =\{\mathcal C_i\}_i$  of tiled cross-sections so that the
perturbation flowboxes $\mathcal{F}_H(\mathcal{C}_i,T)=\displaystyle\cup_{t\in[0,T]}X^t_H(\mathcal{C}_i)$
of length $T$ in $\mathcal{E}_{H,e}$ are contained in $V$ and have disjoint supports, and satisfies:
\begin{enumerate}
	\item [(a)] there exist two families of compacts $\mathcal{I}$ and $\mathcal{O}$ contained in the interior of the tiles of $\mathcal{C}$ such that any segment of any $\epsilon$-pseudo-orbit on $\mathcal{E}_{H,e}$ starting outside $V$ (respectively, inside $W$) and ending inside $W$ (respectively, outside $V$) intersects $X^{[0,1]}_H(I)$ for some compact $I\in\mathcal{I}$ (respectively, intersects $X^{[0,1]}_H(O)$ for some compact $O\in\mathcal{O}$);
	\item [(b)] for any compacts $I\in\mathcal{I}$ and $O\in\mathcal{O}$, there exist a pseudo-orbit on $\mathcal{E}_{H,e}$,  with jumps inside $X^{[0,1]}_H(\mathcal{C})$ and preserving the tile of $\mathcal{C}$, starting in $I$ and ending in $O$;
	\item [(c)] for any $x$ in the closure of $\mathcal{F}_H(\mathcal{C},T)$, the first return time of $X^T_H(x)$ to the closure of the perturbation flowboxes  is larger than $t$.
\end{enumerate}

In a few words, a closed orbit $\gamma$ is avoidable for $(\mathcal{U}, T)$ if, for any $t>0$, there exists a family of perturbation flowboxes of length $T$ (with tiled cross
sections) such that, given a pseudo-orbit with starting and ending points far from $\gamma$, but passing very close of $\gamma$, we can exchange the segments of the pseudo-orbit passing close of $\gamma$ by segments of another pseudo-orbit with jumps inside the tiles replacing the original pseudo-orbit by another with smaller number of elements. By Theorem \ref{robinson} the closed orbits of a $C^2$-generic Hamiltonian are uniformly avoidable.

The closed orbits for the Hamiltonian $H$ on $\mathcal{E}_{H,e}$ are called \textit{uniformly avoidable}
if they are isolated and there is a $C^2$-neighborhood $\mathcal{U}$ of $H$ and $T>0$ so that all closed orbits in $\mathcal{E}_{H,e}$ are avoidable for ($\mathcal{U}, T$).

\medskip

\emph{Covering families:}\label{covsection} Given a Hamiltonian system $(H,e,\mathcal{E}_{H,e})$, we want to cover the orbits on $\mathcal{E}_{H,e}$ by a family of perturbation flowboxes, with pairwise disjoint supports. In general, if $\mathcal{E}_{H,e}$ contains closed orbits with small period, then $\mathcal{E}_{H,e}$ cannot have a covering family.
In fact, this kind of closed orbits are disjoint from the perturbation flowboxes.
This motivates the definition of \textsl{covering families outside $\mathcal{V}=\cup_{j=1}^r V_j$}, where the sets $V_j$ ($1\leq j\leq r$) are, in fact, neighborhoods of these closed orbits with small period.

Let $\mathcal{U}$ be a $C^2$-neighborhood of $H$ and let $(\mathcal{F}_H(\mathcal{C}_i,T))_i$ denote a family of perturbation flowboxes for 
($H,\mathcal{U}$), with pairwise disjoint supports, and $\mathcal{V}$ denote a finite family of non-empty open subsets of $\mathcal{E}_{H,e}$ with pairwise disjoint supports. We say that a family $(\mathcal{F}_H(\mathcal{C}_i,T))_i$ of perturbation flowboxes for ($H,\mathcal{U}$) with disjoint supports is a \emph{covering family of 
$\mathcal{E}_{H,e}$} if  there exists a family of compact subsets $\mathcal D \subset \bigcup_i interior(\mathcal{C}_i)$ and 
$t>0$ so that any orbit segment of $x\in\mathcal{E}_{H,e}$ of length $\geq t$ intersects some element in $\mathcal D$. The following definition is an adaption of \cite[Definition 3.2]{ABC} for Hamiltonians. A finite set of perturbation flowboxes $(\mathcal{F}_H(\mathcal{C}_i,T))_i$ for $(H,\mathcal{U})$ and with pairwise disjoint supports is said to be a \emph{covering family of $\mathcal{E}_{H,e}$ outside $\mathcal{V}$} if there are 
\begin{itemize}
\item $t>0$ and $\epsilon>0$;
	\item an open set $W_j$ and a compact set $F_j$, such that $F_j\subset W_j\subset V_j$, for every $j\in\{1,...,r\}$;
	\item a finite family of compacts $\mathcal{D}=\displaystyle\cup_{i=1}^s D_i$ on $\mathcal{C}$, such that every $D_i$ is contained in the interior of a tile of $\mathcal{C}=\bigcup_i \mathcal{C}_i$;
	\item two families $\mathcal{I}_{j}$ and $\mathcal{O}_{j}$ contained in $\mathcal{D}$ such that the support of the flowboxes of the tiles of $\mathcal{C}$ containing this compacts is contained in $V_j$, for any $j\in\{1,...,r\}$,
	\end{itemize}
such that any segment of any $\epsilon$-pseudo-orbit on $\mathcal{E}_{H,e}$:
\begin{enumerate}
	\item[(a)] with length $\geq t$ intersects some $F_j$ or a compact of $\mathcal{D}$;
	\item[(b)] starting outside $V_j$ and ending inside $W_j$ intersects a compact of $\mathcal{I}_{j}$;
	\item[(c)] starting inside $W_j$ and ending outside $V_j$ intersects a compact of $\mathcal{O}_j$;
\end{enumerate}
and for any $j\in\{1,...,r\}$ and for any compact sets $I\subset\mathcal{I}_{j}$ and $O\subset\mathcal{O}_{j}$, there exists a pseudo-orbit with jumps inside the tiles of $\mathcal{C}$, with starting point in $I$ and ending point in $O$. Roughly speaking, given a \textsl{covering family of $\mathcal{E}_{H,e}$ outside $\mathcal{V}$}, any pseudo-orbit either returns regularly to the tiled cross sections, during the time it passes out of $\mathcal{V}$ or else, intersects a compact set $F_j\subset V_j$.
In this last situation, the pseudo-orbit must go through an ``in set" $I\subset\mathcal{I}_j$ and then through an ``out set" $O\subset\mathcal{O}_j$.
Moreover, we can even switch the segment of the pseudo-orbit between $I$ and $O$ by a pseudo-orbit with jumps inside the tiles of $\mathcal{C}$. The existence of these objects follows, up to considering cross sections, from ~\cite[\S4]{ABC} for symplectomorphisms.

\subsection{Connecting pseudo-orbits} 

Arnaud et al. proved, in \cite[Proposition 4.2]{ABC}, that if the eigenvalues of any closed orbit of a symplectomorphism do not satisfy non-trivial resonance relations, then the closed orbits are uniformly avoidable.
Therefore, since the transversal linear Poincar\'e flow is a symplectomorphism, then for any $H\in C^2(M,\mathbb{R})$ and any periodic point $p$ of $H$ with period $\ell$, the eigenvalues of $\Phi_H^{\ell}(p)$ do not satisfy non-trivial resonances, then the closed orbits of $H$ are \textsl{uniformly avoidable}.

Hence, to prove the Connecting Lemma for pseudo-orbits of Hamiltonians it is enough to show that if $Per(H)$ on $\mathcal{E}_{H,e}$ are uniformly avoidable, then, for any $C^2$-neighborhood $\mathcal{U}$ of $H$ and for any $x,y\in\mathcal{E}_{H,e}$, there is $\tilde{H}\in\mathcal{U}$ and $t>0$, such that $\tilde{H}(x)=e$ and $X^t_{\tilde{H}}(x)=y$. It is obvious that this statement follows immediately if $y\in\mathscr{O}_H(x)$.
In fact, to prove the Connecting Lemma, it is enough to show it for certain points $x,y\in\mathcal{E}_{H,e}$. Indeed, the same argument as the ones in \cite[Lemma 3.12]{ABC} allows us to reduce the proof to the case when $x,y$ are not closed orbits.
So, we assume that $x,y\notin Per(H)$ and $y\notin\mathscr{O}_H(x)$ for every $H$ in a  $C^2$-neighborhood $\mathcal{U}_0$. Using Kakutani towers for flows (\cite{AK,Bessa}) and adapting the ideas  of the proof in \cite[Proposition 3.13]{ABC} we obtain that there exist a neighborhood $\mathcal{U}\subset\mathcal{U}_0$ of $H$, a family of disjoint open sets $\mathcal{V}$ and a family of perturbation flowboxes $(\mathcal{F}_H(\mathcal{C}_i,T))_i$ for $(H,\mathcal{U})$ with disjoint supports, both $\mathcal{V}$ and $(\mathcal{F}_H(\mathcal{C}_i,T))_i$ not containing $x$ nor $y$, such that the perturbation flowboxes covering $\mathcal{E}_{H,e}$ outside $\mathcal{V}$. Now, the classical strategy developed in ~\cite{BC} and carried out in the proof of \cite[Proposition 3.4]{ABC} for symplectomorphisms allows us to obtain a pseudo-orbit preserving the tiling and, consequently one erases, flowbox after flowbox, all the jumps of the pseudo-orbit. Thus, there exist $\tilde{H}\in\mathcal{U}$ and $t>0$, such that $\tilde{H}(x)=e$ and $X^t_{\tilde{H}}(x)=y$.

\vspace{0.2cm}

\noindent\textbf{Acknowledgements:}
MB was partially supported by National Funds through FCT (Fun\-da\c{c}\~{a}o para a Ci\^{e}ncia e a Tecnologia) project PEst-OE/MAT/UI0212/2011. CF was supported by FCT - Funda\c{c}\~ao para a Ci\^encia e a Tecnologia SFRH/BD/33100/2007.  
JR was partially supported by FCT - Funda\c{c}\~ao para a Ci\^encia e a Tecnologia through the project 
CMUP: PTDC/MAT/099493/2008.
PV was partially supported by a CNPq-Brazil postdoctoral fellowship at University of Porto.



\begin{thebibliography}{ABC}



\bibitem{AAB} {F. Abdenur, A. Avila and J. Bochi}, \emph{Robust transitivity and topological mixing for $C^1$-flows.} Proc. Amer. Math. Soc., \textbf{132}, \textbf{3} (2003) 699--705
\bibitem{AC}  F. Abdenur and S. Crovisier, \emph{Transitivity and topological mixing for $C^1$ diffeomorphisms}. 
Essays in Mathematics and its Applications (2012), 1--16, Springer Berlin Heidelberg
\bibitem{Abraham} {R. Abraham and J.E. Marsden}, \emph{Foundations of Mechanics}. The Benjamin/Cummings Publishing Company. Advanced Book Program, 2$^{nd}$ edition, 1980
\bibitem{AK} {W. Ambrose and S. Kakutani}, \emph{Structure and continuity of measurable flows.}  Duke Math. J., \textbf{9} (1942) 25--42
\bibitem{ABC} {M.-C. Arnaud, C. Bonatti and S. Crovisier}, \emph{Dynamique sympletiques g\'en\'eriques}. Erg. Th. \& Dyn. Syst. \textbf{25}, \textbf{5} (2005) 1401--1436
\bibitem{B1} { M. Bessa}, \emph{A generic incompressible flow is topological mixing.} C. R. Acad. Sci. Paris, Ser. I, \textbf{346} (2008) 1169--1174 
\bibitem{Bessa} { M. Bessa}, \emph{The Lyapunov exponents of generic zero divergence-free three-dimensional vector fields.}  Erg. Th. \& Dyn. Syst., \textbf{27}, \textbf{6} (2007) 1445--1472
\bibitem{MBJLD} { M. Bessa and J. Lopes Dias}, \emph{Generic Dynamics of 4-Dimensional $C^2$ Hamiltonian Systems.}
Commun. in Math. Phys., \textbf{281} (2008) 597--619
\bibitem{MBJLD2} { M. Bessa and J. Lopes Dias}, \emph{Hamiltonian elliptic dynamics on symplectic 4-manifolds.}
Proc. Amer. Math. Soc., \textbf{137} (2009) 585--592 
\bibitem{BC} {C. Bonatti and S. Crovisier}, \emph{R\'ecurrence et g\'en\'ericit\'e}. Invent. Math. \textbf{158}, \textbf{1} (2004) 33--104 
\bibitem{MM} L. Markus and K.R. Meyer, \emph{Generic Hamiltonian dynamical systems are neither integrable nor ergodic}. Memoirs of the American Mathematical Society, 144. American Mathematical Society, Providence, R.I., 1974
\bibitem{N} S. Newhouse, {Quasi-elliptic periodic points in conservative dynamical systems}. Amer. J. Math. 99, 5 (1977), 1061--1087
\bibitem{PughRob} { C. Pugh and C. Robinson},  \emph{The $C^1$ closing lemma, including Hamiltonians.} Ergod. Th \& Dynam. Sys. \textbf{3} (1983) 261--313
\bibitem{R0} {C. Robinson}, \emph{Generic properties of conservative systems I and II.} Amer. J. Math., \textbf{92} (1970) 562--603
\bibitem{S} M. Shub, \emph{Global stability of dynamical systems.} Springer-Verlag, New York, 1987
\bibitem{Y} J. C. Yoccoz, \emph{Travaux de Herman sur les Tores invariants}. Ast\'erisque, 206 Exp: 754, 4 (1992), 311--344
\bibitem{WX} {L. Wen and Z. Xia}, \emph{$C^1$ connecting lemmas}. Trans. Amer. Math. Soc., \textbf{352} (2000) 5213--5230

\end{thebibliography}
\end{document}